\newtheorem{theorem}{Theorem}[section]
\newtheorem{lemma}[theorem]{Lemma}
\theoremstyle{definition}
\newtheorem{definition}[theorem]{Definition}
\theoremstyle{remark}
\numberwithin{equation}{section}
\theoremstyle{plain}
\newtheorem{prop}[theorem]{Proposition}
\newtheorem{theorem_without}{Theorem}
\newtheorem{theorem_without_2}{Theorem}
\newcommand*{\N}{\mathbb{N}}  % luonnollisten lukujen joukko
\newcommand*{\Q}{\mathbb{Q}}  % rationaalilukujen joukko
\newcommand*{\R}{\mathbb{R}}  % reaalilukujen joukko
\newcommand*{\eps}{\varepsilon}
\newcommand*{\norm}[1]{\left\lVert#1\right\rVert} % normi
\newcommand\restr[2]{{% we make the whole thing an ordinary symbol
  \left.\kern-\nulldelimiterspace % automatically resize the bar with \right
  #1 % the function
  \littletaller % pretend it's a little taller at normal size
  \right|_{#2} % this is the delimiter
  }}
\newcommand{\littletaller}{\mathchoice{\vphantom{\big|}}{}{}{}}
\definecolor{weinrot}{rgb}{0.7,0.15,0.15}
\begin{document}

% \title[short text for running head]{full title}
%\title[How rugged can the boundary of an {\larger{\larger{\textepsilon}}}-neighbourhood be?]{How rugged can the boundary \\of an {\larger{\larger{\textepsilon}}}-neighbourhood be?}
\title[{\larger{\larger{\textepsilon}}}-neighbourhoods in the plane with a nowhere-smooth boundary]{{\larger{\larger{\textepsilon}}}-neighbourhoods in the plane \\ with a nowhere-smooth boundary}

%    Only \author and \address are required; other information is
%    optional.  Remove any unused author tags.

%    author one information
% \author[short version for running head]{name for top of paper}
%\author[Lamb]{{J.} {S.} {W.} Lamb}
%\address{Jeroen Lamb \\ Department of Mathematics \\ Imperial College London \\ 180 Queen's Gate, South Kensington, London SW7 2AZ, United Kingdom}
%\curraddr{}
%\email{kalle.timperi@gmail.com}
%\thanks{}

%    author two information
%
%\author[Rasmussen]{{M.} Rasmussen}
%\address{Martin Rasmussen \\ Department of Mathematics \\ Imperial College London \\ 180 Queen's Gate, South Kensington, London SW7 2AZ, United Kingdom}
%\curraddr{}
%\email{}
%\thanks{}

\author[Lamb]{{Jeroen S.W.} Lamb}
\address{Jeroen S.W.~Lamb \\ Department of Mathematics \\ Imperial College London \\ 180 Queen's Gate, London SW7 2AZ, United Kingdom}
\email{jsw.lamb@imperial.ac.uk}

%    author two information
\author[Rasmussen]{{Martin} Rasmussen}
\address{Martin Rasmussen \\ Department of Mathematics \\ Imperial College London \\ 180 Queen's Gate, London SW7 2AZ, United Kingdom}
\email{m.rasmussen@imperial.ac.uk}

\author[Timperi]{Kalle G.~Timperi}
\address{Kalle Timperi \\ Center for Ubiquitous Computing \\ University of Oulu \\ Erkki Koiso-Kanttilan katu 3, door E, Oulu, Finland}
\email{kalle.timperi@gmail.com}

%    The 2010 edition of the Mathematics Subject Classification is
%    the current definitive version.
\subjclass[2010]{Set-valued and variational analysis; Differential Geometry, General Topology}

\date{\today}

\begin{abstract}
%We consider $\eps$-neighbourhoods $E_\eps = \{x \in \R^2 \, : \, \textrm{dist}(x, E) \leq \eps \}$ of planer subsets $E \subset \R^2$ and 
We give an example of a planar set $E\subset \mathbb{R}^2$ for which the boundary $\partial E_\eps$ of its $\varepsilon$-neighbourhood $E_\eps = \{x \in \R^2 \, : \, \textrm{dist}(x, E) \leq \eps \}$ is nowhere $C^1$-smooth, in the sense that 
the set of singularities on the boundary is countably dense (where we note that the latter set cannot be uncountable). Furthermore, we give an example of a planar set $E$ for which $\partial E_\eps$ has the same properties as above, but in addition contains an uncountable subset, with non-integer Hausdorff dimension, where curvature is not defined. Both constructions make use of a characterisation of those star-shaped sets that are an $\eps$-neighbourhood of one of their subsets.
\end{abstract}

\maketitle

\tableofcontents

\setcounter{section}{0}
\section{Introduction}
%\subsection{Motivation}
For a given set $E \subset \R^2$ and radius $\eps > 0$, the (closed) \emph{$\eps$-neighbourhood} of $E$ is the set
\begin{equation} \label{Def_Tubular_Neighbourhood}
E_\eps := \overline{B_\eps(E)} := \overline{ \bigcup_{x \in E} B_\eps(x)},
\end{equation}
where the overline denotes closure and $B_\eps(\cdot)$ is an open ball of radius $\eps$ in the Euclidean metric. The sets $E_\eps$ are sometimes called %in the literature
\emph{tubular neighbourhoods} \cite{Fu_Tubular_neighborhoods}, \emph{collars} \cite{Przeworski_An_Upper_Bound} or \emph{parallel sets}~\cite{Rataj_Winter_On_Volume, Stacho_On_the_volume}. 
%
%The boundary $\partial E_\eps$ is a subset of the set $\partial E_{<\eps} := \partial \left( \bigcup_{x \in E} B_\eps(x) \right)$. The latter is sometimes referred to as the \emph{$\eps$-boundary} \cite{Ferry_When_epsilon_boundaries} or \emph{$\eps$-level set} \cite{Oleksiv_Pesin_Finiteness} of $E$.
Understanding the properties of $\eps$-neighbourhoods is a natural and fundamental question in (Euclidean) geometry, but they are particularly relevant in specific settings where $\eps$-neighbourhoods arise naturally. These include the classification and bifurcation of minimal invariant sets in random dynamical systems with bounded noise~\cite{TopoBif_of_MinInvSets}, as well as control theory~\cite{Colonius_Kliemann_Dynamics_of_Control}.

\begin{figure}[h]
      \captionsetup{margin=0.75cm}
      \centering     % \vspace{-1mm}
%%%%%\hspace{6mm} %[b]{0.40\textwidth}
                \includegraphics[width=0.75\textwidth]{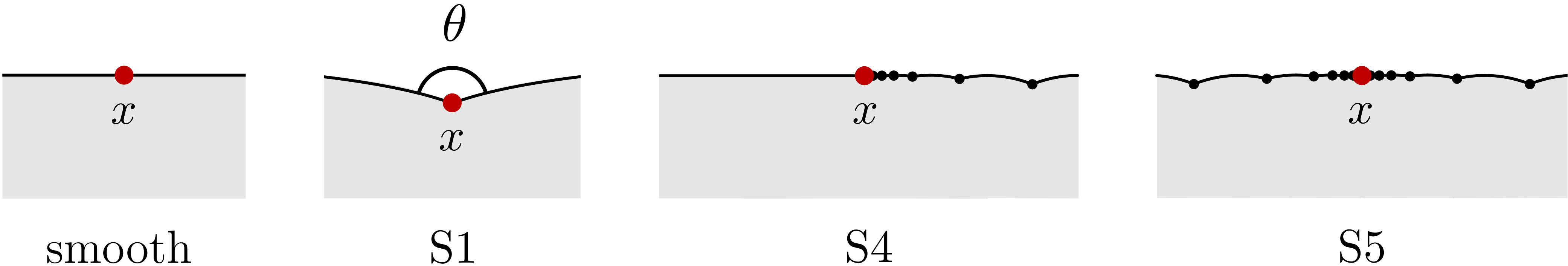} \\[-1mm]
                \caption{Possible local boundary geometries around an $\eps$-neighbourhood boundary point $x \in \partial E_\eps$ (red dot). At a smooth point (smooth) the boundary is locally a $C^1$-curve. At a wedge singularity (S1) the tangential directions at the boundary form an angle $\theta \in (0, \pi)$. One-sided (A4) and two-sided (S5) shallow singularities are accumulation points of wedge singularities, from one or two directions, respectively.}
\label{Fig_smooth_wedge_and_shallow}
\end{figure}

Intuitively, the effect of replacing a set $E$ by its $\eps$-neighbourhood amounts to smoothening and mollifying the original set. For example, $E$ might have a fractal boundary with infinite length (infinite one-dimensional Hausdorff measure) and tangents and curvature may not exist in the classical sense. By contrast, the boundary $\partial E_\eps$ of the $\eps$-neighbourhood of any set $E$ is a Lipschitz manifold for most values of $\eps$, and the set of radii for which this property does not hold has been shown to be small~\cite{Fu_Tubular_neighborhoods, Rataj_Zajicek_Smallness}. Even in the most pathological cases, the neighbourhoods of singularities on $\partial E_\eps$ can be classified to a small number of distinct categories, characterised by relatively simple topological and tangential properties~\cite{lamb2025boundariesvarepsilonneighbourhoodsplanarsets}. %Common examples of $\eps$-neighbourhoods feature smooth boundary subsets.
Indeed, it is challenging to come up with an $\eps$-neighbourhood whose boundary does not contain a sizeable $C^1$-smooth subset. This poses the question: how rugged can the boundary of an $\eps$-neighbourhood of a set be? In this paper we investigate this question for compact planar sets $E \subset \R^2$.

\subsection{Main Results} %In this paper we construct two examples that complement the general results described in the previous section.
Our main result is the construction of a set $E \in \R^2$ with the property that the boundary $\partial E_\eps$ of the $\eps$-neighbourhood $E_\eps := \overline{B_\eps(E)}$ consists entirely of wedges and their accumulation points, shallow singularities. Consequently, the boundary has no subsets that can be represented as $C^1$-smooth curves, even though tangents do exist on the boundary outside a countable subset. See Figure~\ref{Fig_smooth_wedge_and_shallow} and \cite[Definition 4.1]{lamb2025boundariesvarepsilonneighbourhoodsplanarsets} for the definitions and illustrations of smooth points, wedges, and shallow singularities. %Using a construction utilising the Cantor set, we also exhibit an uncountable subset of a certain $\eps$-neighbourhood, where curvature does not exist.
Our main result is the following.
\begin{theorem_without}[\textbf{Existence of a nowhere-smooth boundary}] \label{Thm_main_existence}
There exists a planar set $E \subset \R^2$ and $\eps > 0$ for which the boundary $\partial E_\eps$ consists only of wedges and shallow singularities.
%(accumulation points of wedges).
In particular, no subset of $\partial E_\eps$ is diffeomorphic to a $C^1$-smooth curve.
\end{theorem_without}
In addition, we construct an example of an $\eps$-neighbourhood with the property that curvature does not exist in an uncountable subset of the boundary.
%exhibit a set $E$ for which the boundary $\partial E_\eps$ contains uncountably many points where curvature is not defined.
This set has zero one-dimensional Hausdorff measure, in accordance with the results obtained in~\cite{lamb2025boundariesvarepsilonneighbourhoodsplanarsets}. In both examples, we use the Cantor set as the technical starting point of the construction.

\subsection{Context} It is known that for a fixed set $E \subset \R^2$, the boundary $\partial E_\eps$ is a Lipschitz manifold for almost all $\eps > 0$. Furthermore, the set of radii $\eps$ for which this property does not hold has been shown to be small in terms of entropy dimension~\cite{Fu_Tubular_neighborhoods} and more refined measures such as gap sums~\cite{Rataj_Zajicek_Smallness}. However, these results by their nature do not reveal the structure of singularities and the existence of tangents and curvature on the boundary.
%But even in the cases where the boundary is a Lipschitz-curve,
Recently, the finer geometry of the boundary was systematically studied in~\cite{lamb2025boundariesvarepsilonneighbourhoodsplanarsets}. There, a complete classification of singularities %(non-smooth points)
on the boundary was established,
%\footnote{We call a point $x \in \partial E_\eps$ \emph{smooth}, if the boundary $\partial E_\eps$ near $x$ is locally a $C^1$-manifold.}
%There, an example was provided showing that so-called \emph{shallow singularities}\footnote{These are points that lie on the boundary of some connected component of the complement $\R^2 \setminus E_\eps$
%%%%% (and thus on some Jordan curve component of the boundary)
%and are accumulation points of increasingly obtuse \emph{wedges}.} may lie densely on a proper subset of the boundary $\partial E_\eps$.
but it remains unaddressed whether there exists a set $E$ whose $\eps$-neighbourhood boundary $\partial E_\eps$ for some $\eps > 0$ contains no smooth points at all. A further question concerns the existence of second-order smoothness, i.e.~curvature. It was shown in~\cite{lamb2025boundariesvarepsilonneighbourhoodsplanarsets} %it was shown that in spite of this potential lack of first order smoothness locally,
that curvature does exist almost everywhere on the boundary with respect to the one-dimensional Hausdorff measure. Notwithstanding this result, it has been an open question whether it is possible for the boundary $\partial E_\eps$ to have uncountably many points at which curvature is not defined in the classical sense.
%In~\cite{lamb2025boundariesvarepsilonneighbourhoodsplanarsets} it was shown that in spite of this potential lack of first order smoothness locally, curvature exists almost everywhere on the boundary with respect to the one-dimensional Hausdorff measure.

%\section{Absence of Smooth Points}
\section{ %An {\larger{\textepsilon}}-neighbourhood
A boundary without smooth points}

In this section we construct a compact planar set $E$ for which the boundary $\partial E_\eps$ of the $\eps$-neighbourhood $E_\eps$ consists entirely of non-smooth points for a certain $\eps > 0$. However, we begin from the opposite direction and characterise the boundaries $\partial E_\eps$ which consist entirely of smooth points. While interesting in its own right, this characterisation is also utilised in the intermediate stages of the main construction. %We first define smooth points on the boundary $\partial E_\eps$, and characterise these in terms of metric projections onto the underlying set $E$. %The following is a rewording of~\cite[Definition 2.2]{lamb2025boundariesvarepsilonneighbourhoodsplanarsets}.

\subsection{Smoothness in terms of metric projections}
\label{Sec_Smoothness}
%We define smooth points through identifying boundary segments with $C^1$-smooth curves.
We now define smooth points on the boundary $\partial E_\eps$, and characterise these in terms of metric projections onto the underlying set $E$. We then introduce the notion of positive reach and use it to characterise those boundaries $\partial E_\eps$ which consist entirely of smooth points.
\begin{definition}[Smooth point, singularity]
\label{Def_smooth_singularity}
Let $E \subset \R^2$ and let $\eps > 0$. A boundary point $x \in \partial E_\eps$ is \emph{smooth} if there exists a $C^1$-curve $\Gamma$ for which $\Gamma = \partial E_\eps \cap \overline{B_\delta(x)}$ for some $\delta > 0$. If $x$ is not smooth, it is a \emph{singularity}.
\end{definition}
A complete characterisation of boundary singularities of $\eps$-neighbourhoods was provided in~\cite{lamb2025boundariesvarepsilonneighbourhoodsplanarsets}.
It turns out that smooth points on $\partial E_\eps$ can be characterised in terms of metric projections. For each $z \in \R^2$, define the \emph{metric projection} onto $E$ by
\begin{equation} \label{Def_metric_projection}
\Pi_{E}(z) := \{y \in \partial E \, : \, \textrm{dist}(z, E) = \norm{z - y} \},
\end{equation}
and consider the set
\begin{equation}
\mathrm{Unp}(E) := \{z \in \R^2 \, : \, \Pi_{E}(x) \, \textrm{is a singleton} \}.
\end{equation}
The set $\mathrm{Unp}(E)$ consists of all those points $z \in \R^2$ that have a unique metric projection onto $E$ (a unique nearest point on $\partial E$).
%\begin{remark}[Smoothness in terms of unique metric projections]
%\label{Rem_smooth_singularity}
%Let $E \subset \R^2$ and let $\eps > 0$.
According to~\cite[Proposition 4.6]{lamb2025boundariesvarepsilonneighbourhoodsplanarsets}, a boundary point $x \in \partial E_\eps$ is smooth in the sense of Definition~\ref{Def_smooth_singularity} if and only if there exists some $r > 0$ for which % neighbourhood $B_r(x)$ such that
$\partial E_\eps \cap B_r(x) \subset \mathrm{Unp}(E)$.\footnote{Since the set $E$ is assumed to be closed, it follows that $\partial E \subset E$ and $\Pi_{E}(z)$ is non-empty for all $z \in \R^2$.} An immediate consequence of this is that the boundary $\partial E_\eps$ consists entirely of smooth points if and only if $E_\eps \subset \mathrm{Unp}(E)$. This observation can be expressed conveniently through the concept of \emph{positive reach}. The following is a rephrasing of\cite[Definition 4.1]{Federer_Curvature_measures}.
%has a unique metric projection onto (has a unique nearest point on) the boundary $\partial E$.
%Denote this projection by
%\begin{equation} \label{Def_metric_projection}
%\Pi_{E}(z) := \{y \in \partial E \, : \, \textrm{dist}(z, E) = \norm{z - y} \},
%\end{equation}
%and define
%\begin{equation}
%\mathrm{Unp}(E) := \{z \in \R^2 \, : \, \Pi_{E}(x) \, \textrm{is a singleton} \}.
%\end{equation}
%\end{remark}

%\begin{definition}[Metric projection] \label{Def_Metric_projection}
%The reach of a set $A \subset \R^d$ is defined in terms of points $x \in \R^d$ whose metric projection onto $A$,
%\begin{equation} \label{Def_metric_projection}
%\Pi_A(x) := \{y \in A \, : \, \textrm{dist}(x,A) = \norm{x - y} \},
%\end{equation}
%is a singleton.
%\end{definition}

%In the proof of
%%%%%%Theorem~\ref{Prop_Star-shaped_eps-neighbourhood} 
%our main result we make use of the
%%%%%%following result (see~\cite[Theorem 4.19]{Federer_Curvature_measures}) which describes how
%fact that the property of positive reach is preserved in continuously differentiable maps, see Theorem~\ref{Thm_preserving_Pos_Reach}. The reach of a set $A \subset \R^d$ is defined in terms of points $x \in \R^d$ whose metric projection $\Pi_A(x)$
%onto $A$,
%\begin{equation} \label{Def_metric_projection}
%\Pi_A(x) := \{y \in A \, : \, \textrm{dist}(x,A) = \norm{x - y} \},
%\end{equation}
%is a singleton. Put another way, these are points for which there exists a unique point in $A$ that minimizes the distance from $x$ to $A$. We denote the set of such points by $\mathrm{Unp}(A)$.
\begin{definition}[Reach of a set] \label{Def_Positive_Reach}
Let $E \subset \R^d$ and let $y \in E$. The (local) reach of $E$ at $y$ is
\begin{equation} \label{Eq_Def_Reach}
\mathrm{reach} (E, y) := \sup\{\eps \geq 0 \, : \, B(y,\eps) \subset \mathrm{Unp}(E)\}.
\end{equation}
The \emph{reach} of the set $E \subset \R^d$ is
\begin{equation} \label{Eq_Def_Positive_Reach}
\mathrm{reach}(E) := \inf_{y \in E} \mathrm{reach} (E, y).
\end{equation}
The set $E$ has \emph{positive reach}, if $\mathrm{reach}(E) > 0$.
\end{definition}

Thus, we obtain the following characterisation for $\eps$-neighbourhoods whose boundaries consist entirely of smooth points. %The result is immediate from the definitions, and we omit the simple proof.
The result follows immediately from~\cite[Proposition 4.6]{lamb2025boundariesvarepsilonneighbourhoodsplanarsets} and the definitions, and we omit the simple proof.

\begin{prop}[\textbf{Characterisation of everywhere smooth boundary}] \label{Prop_Characterisation_of_smooth_boundaries}
Let $E \subset \R^2$ and let $\eps > 0$. Then the following are equivalent:
\begin{itemize}
  \item[(i)] Every $x \in \partial E_\eps$ is a smooth point,
  \item[(ii)] $E_\eps \subset \mathrm{Unp}(E)$,
  \item[(iii)] $\mathrm{reach}(E) \geq \eps$.
\end{itemize}
\end{prop}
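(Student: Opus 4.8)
The plan is to prove the two equivalences (i) $\Leftrightarrow$ (ii) and (ii) $\Leftrightarrow$ (iii) separately, since each rests on a different ingredient: the boundary characterisation of smooth points in \cite[Proposition 4.6]{lamb2025boundariesvarepsilonneighbourhoodsplanarsets} for the former, and the definition of reach together with the monotonicity of the metric projection \cite{Federer_Curvature_measures} for the latter. Two of the four implications are essentially immediate and I would dispatch them first. For (ii) $\Rightarrow$ (i), note that $\partial E_\eps \subset E_\eps \subset \mathrm{Unp}(E)$, so for every $x \in \partial E_\eps$ and \emph{every} $r > 0$ one trivially has $\partial E_\eps \cap B_r(x) \subset \mathrm{Unp}(E)$; Proposition 4.6 then makes $x$ a smooth point. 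For (ii) $\Rightarrow$ (iii), I would fix $y \in E$ and observe that any $z \in B(y,\eps)$ satisfies $\dist(z,E) \le \norm{z-y} < \eps$, so $z$ lies in the interior of $E_\eps$ and hence in $\mathrm{Unp}(E)$ by (ii); thus $B(y,\eps) \subset \mathrm{Unp}(E)$, giving $\mathrm{reach}(E,y) \ge \eps$, and taking the infimum over $y \in E$ yields $\mathrm{reach}(E) \ge \eps$.

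The two reverse implications carry the geometric content, and both reduce to a single principle: uniqueness of the nearest point propagates inward along projection segments, or, equivalently, non-uniqueness propagates outward toward $\partial E_\eps$. The clean tool is the classical segment property of metric projections, namely that if $w \in \mathrm{Unp}(E)$ with projection $p$, then every point of $[p,w]$ again lies in $\mathrm{Unp}(E)$ with the same projection. For (iii) $\Rightarrow$ (ii), I would take $z \in E_\eps$ with a nearest point $p \in \partial E \subset E$ and set $d := \dist(z,E) \le \eps$; since $\mathrm{reach}(E) \ge \eps$ forces $B(p,r) \subset \mathrm{Unp}(E)$ for every $r < \eps$, the case $d < \eps$ is immediate upon choosing $r \in (d,\eps)$, giving $z \in B(p,r) \subset \mathrm{Unp}(E)$. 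For (i) $\Rightarrow$ (ii), I would argue by contraposition: Proposition 4.6 shows (i) is equivalent to $\partial E_\eps \subset \mathrm{Unp}(E)$, so it suffices to produce, from any $z_0 \in E_\eps$ with two distinct nearest points $p,p'$, a non-smooth boundary point. Moving away from $E$ along the perpendicular bisector of $p$ and $p'$ keeps the moving point equidistant to (at least) two points of $E$ while its distance to $E$ increases monotonically, until the level $\dist(\cdot,E) = \eps$ is reached; the resulting point of $\partial E_\eps$ is not in $\mathrm{Unp}(E)$, hence not smooth, contradicting (i).

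The main obstacle I anticipate is the boundary case $d = \eps$, i.e.\ points lying at distance \emph{exactly} $\eps$ from $E$. Here the mismatch between the closed neighbourhood $E_\eps = \overline{B_\eps(E)}$ in (ii) and the open balls in the definition of $\mathrm{reach}$ becomes visible: $\mathrm{reach}(E) \ge \eps$ only delivers $B(p,r) \subset \mathrm{Unp}(E)$ for $r$ strictly below $\eps$, so a point at distance exactly $\eps$ escapes every open ball and must instead be reached as a limit along its projection segment, using closedness of $E_\eps$ and a compactness argument on the projections $\Pi_E(z_t)$ to prevent the nearest point from splitting in the limit. This is the delicate step, and it is also where one must verify that the outward ray in the (i) $\Rightarrow$ (ii) argument genuinely lands on $\partial E_\eps$ with the expected projection; I would handle it by approximating from inside and combining continuity of $\dist(\cdot,E)$ with the segment property.
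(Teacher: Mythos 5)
Your two short implications are correct: (ii) $\Rightarrow$ (i) follows from Proposition 4.6 exactly as you say, and (ii) $\Rightarrow$ (iii) from $B(y,\eps)\subset B_\eps(E)\subset E_\eps$; the case $\dist(z,E)<\eps$ of (iii) $\Rightarrow$ (ii) is also fine. (For comparison: the paper gives no proof at all, asserting the proposition is immediate from Proposition 4.6 and the definitions.) However, the two steps you yourself flag as delicate do not just need more care --- they fail, because the proposition as literally stated is false precisely in those boundary cases. For (iii) $\Rightarrow$ (ii) at distance exactly $\eps$, take $E=\{(-1,0),(1,0)\}$ and $\eps=1$: then $\mathrm{reach}(E)=1\geq\eps$, yet the origin lies in $E_\eps=\overline{B_\eps(E)}$ and has two nearest points. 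No compactness argument on $\Pi_E(z_t)$ can prevent ``the nearest point from splitting in the limit'': along the segment from a projection $p$ to the origin every interior point has the unique projection $p$, and the projection genuinely splits at the endpoint, since $\Pi_E$ is only upper semicontinuous. For (i) $\Rightarrow$ (ii), your bisector argument already has a local flaw --- $\dist(\cdot,E)$ need not increase monotonically along the perpendicular bisector of $p$ and $p'$ (a third piece of $E$ placed on the bisector makes it decrease, and the moving point can acquire a unique nearest point before reaching level $\eps$) --- but the decisive obstruction is again a counterexample: for $E=\partial B_1(0)$ and $\eps=1$ one has $E_\eps=\overline{B_2(0)}$, whose boundary is a smooth circle, so (i) holds, while the centre belongs to $E_\eps$ and has all of $E$ as its set of nearest points, so (ii) fails. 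The set of points with non-unique projection can sit entirely in the interior of $E_\eps$ and never propagate out to $\partial E_\eps$.

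What your correct fragments do establish is: (i) is equivalent to $\partial E_\eps\subset\mathrm{Unp}(E)$ (via Proposition 4.6); (ii) implies both (i) and (iii); and $\mathrm{reach}(E)>\eps$ (strict) implies (ii), by your own $d\le\eps<r$ argument with $r$ chosen between $\eps$ and $\mathrm{reach}(E)$. That last implication is the only one the paper actually needs downstream, where the reach hypotheses are strict. So rather than trying to close the $d=\eps$ case or repair the bisector argument, you should record the counterexamples above and prove the corrected statement, e.g.\ with (iii) replaced by $\mathrm{reach}(E)>\eps$ (at the cost of the equivalence with (ii) becoming a one-way implication) or with (ii) and (iii) phrased for the open neighbourhood $B_\eps(E)$ and (i) replaced by $\partial E_\eps\subset\mathrm{Unp}(E)$; as stated, the three conditions are pairwise inequivalent.
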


\subsection{Construction of a nowhere smooth boundary} \label{Sec_Construction}
%The challenge in the construction that we present below for an $\eps$-neighbourhood boundary without smooth points is two-fold.
We now present the construction for an $\eps$-neighbourhood boundary without smooth points. There are two main difficulties in this construction.
%that we present below for an $\eps$-neighbourhood boundary without smooth points
Firstly, since the $\eps$-neighbourhood consists of $\eps$-radius balls with constant curvature, the outer boundary of any $\eps$-neighbourhood must contain subsets that curve in the same direction (although the curvature can be less than $1/\eps$). However, at each wedge singularity, the boundary tilts in the opposite direction. If wedges are dense on the boundary, it seems unlikely for the boundary to be curving along the direction implied by the circular geometry of the $\eps$-balls. However, this turns out not to be an issue. This is because there are only countably many wedges, and the angles between the tangential directions at each wedge can be chosen to be small enough to constitute a summable sequence. Secondly, instead of directly defining the underlying set $E \subset \R^2$, we first define a curve $\Gamma$ that we subsequently aim to identify as the $\eps$-neighbourhood boundary $\partial E_\eps$ of some set $E \subset \R^2$. Thus, a non-trivial step of the proof is to show that there indeed exists some set $E$ for which $\Gamma = \partial E_\eps$.

The construction proceeds in two steps.
In the first step we construct a continuous function $S : [0, 2\pi] \to \R$ with $S(0) = S(2\pi)$ as the sum $S = P + I$, where $P$ is a smooth function with bounded curvature, and $I$ is a convex function that is non-differentiable at all rationals on $[0, 2\pi]$. In the second step, we apply a polar-coordinate transform to this function to wrap it around the origin.

\textbf{Step 1.} We begin %this section
by constructing %an increasing function on the interval $[0, 2\pi]$ that has a discontinuity (a jump) at each rational argument. Consider
an increasing, bounded function $f: [0, 2\pi] \to \R_+$ % with $f(0) = 0$ and $f(2\pi) = V > 0$
that is discontinuous at every rational number $q \in \Q \cap (0,2\pi)$ but continuous at every irrational number $r \in [0,2\pi] \setminus \Q$. The construction follows~\cite[Remark 4.31]{Rudin-principles}. %, but we repeat it here for completeness.

Write $\Q \cap [0,2\pi] = \big \{ q_n \, : \, n \in \N \big \}$ and define
\begin{equation} \label{Def_Set_N(x)}
N(x) := \{n  \, : \, q_n \leq x\}.
\end{equation}
Then, take any positive summable sequence $(m_n)_{n=0}^\infty$ with $\sum_{n=0}^\infty m_n = V \in (0, \infty)$ and set %$f(0) := 0$ and  %% This was an unnecessary idea
\begin{equation} \label{Eq_Def_f}
f(x) := \sum_{n \in N(x)} m_n
\end{equation}
for all $x \in [0,2\pi]$. Note that $f(0) = m_{n(0)} > 0$ for some index $n(0) \in \N$ that satisfies $q_{n(0)} = 0$.
%
%\footnote{Since $0$ is excluded from the numbering of rationals in~\eqref{Def_Set_N(x)} it is natural to define $f(0) = 0$. This does not affect the continuity properties of $f$ on $[0, 2\pi]$ but it does affect the differentiability at $0 + 2\pi \Z$ of the periodic extension of the function $S$ in~\eqref{Eq_Def_S}. This is a technical detail that makes the construction notationally simpler, but the reader can safely ignore this point in what follows.}
%
Thus, $f$ is increasing and upper semi-continuous on $[0,2\pi]$, with a jump of amplitude $m_n$ at each rational $q_n \in (0, 2\pi]$. Furthermore, $f$ is continuous at every irrational $x \in [0, 2\pi] \setminus \Q$ and satisfies $f(2\pi) = \sum_{n=0}^\infty m_n = V$.

As an almost everywhere continuous bounded function, $f$ is Riemann-integrable. Since $f$ is monotonically increasing on $[0, 2\pi]$, the integral function
\begin{equation} \label{Eq_Def_I_f}
I(x) := \int_0^x f(s) ds
\end{equation}
is convex. Most significantly for our example, $I$ has a well-defined derivative at every irrational $r \in [0,2\pi] \setminus \Q$, but not at any rational $q \in \Q \cap (0,2\pi]$.

To obtain a curve that can be interpreted as the boundary of an $\eps$-neighbourhood, we now add to $I$ pointwise another function $P$. This function needs to be smooth, have bounded curvature, and satisfy certain boundary conditions. These are defined by the corresponding boundary conditions for the sum function $S: [0, 2\pi] \to \R$, defined pointwise by
\begin{equation} \label{Eq_Def_S}
S(x) := I(x) + P(x).
\end{equation}
Let $L := I(2\pi)$ and let $V := f(2\pi) = \sum_{n=0}^\infty m_n$ be as above. We want the image of $S$ to be a Jordan curve when mapped around the origin with the polar-coordinate transform. Thus, we require $S(0) = S(2\pi) = L$. Furthermore, in order to have a wedge singularity at the image of $0$, the right derivative of $S$ at $0$ should differ from the left derivative of $S$ at $2\pi$. This can be ensured by requiring that
\[
S'(0) = f(0) = m_{n(0)}, \quad \textrm{and} \quad S'(2\pi) = 0.
\]
Thus, the function $P$ needs to satisfy
\begin{equation} \label{Eq_Polynomial_Boundary_Conditions}
P(0) = L, \quad P'(0) = 0, \quad P(2\pi) = 0, \quad \textrm{and} \quad P'(2\pi) = -V.
\end{equation}
%Let $I := I_f(2\pi)$ and let $V := f(2\pi) = \sum_{n=1}^\infty m_n$ as defined above. We now implicitly define a third degree polynomial function $P: [0, 2\pi] \to \R$ by requiring that the sum function $S: [0, 2\pi] \to \R$ given by
%\begin{equation} \label{Eq_Def_S}
%S(x) := I_f(x) + P(x)
%\end{equation}
%satisfies $S(0) = S(2\pi) = I$, $S'(0) = f(0) = m_{n(0)}$ and $S'(2\pi) = 0$.\footnote{For the purposes of implementing this example numerically, we give an explicit representation for the function $P$ as a third degree polynomial. For the general argument, this particular form of the function is not required. It is sufficient that this function satisfies the boundary conditions~\eqref{Eq_Polynomial_Boundary_Conditions}, is twice differentiable and has bounded curvature on the interval $[0, 2\pi]$.}
%For any given values $I$ and $V$,
%These equations imply
%\begin{equation} \label{Eq_Polynomial_Boundary_Conditions}
%P(0) = I, \quad P'(0) = 0, \quad P(2\pi) = 0, \quad \textrm{and} \quad P'(2\pi) = -V.
%\end{equation}
%Note that $P$ is completely determined by these properties as soon as $V$ and $I$ are fixed. It is then given by the expression
%\begin{equation} \label{Eq_Def_P}
%P(x) = \frac{I - \pi V}{4 \pi^3}x^3 + \frac{2\pi V - 3I}{4 \pi^2} x^2 + I.
%\end{equation}
We have not required that $P''(0) \leq 0$, so $P$ can be increasing in some neighbourhood of zero.
%However, the curvature of the graph of $P$ is bounded since its second derivative is a continuous function on a compact interval.

\textbf{Step 2.} Consider the polar coordinate map $\alpha : \R \times \R_+ \to \R^2$ given by
\begin{equation} \label{Eq_Def_polar_map}
\alpha(x, r) := r(\cos x, \sin x)
\end{equation}
and let $J = \alpha\big(\Gamma_S \big)$ be the image under $\alpha$ of the graph $\Gamma_S$ of $S$. In other words, each angle $x \in [0, 2\pi]$ corresponds to a unique point $\alpha(x, S(x)) \in J$ which is at an angle $x$ relative to the positive horizontal axis and whose distance from the origin is $S(x)$.
%:= \big\{p(x, x) : x \in [0, 2\pi] \big\}.
It follows from $S(0) = S(2\pi)$ that $J$ is a Jordan curve. We show in the next section that %Theorem~\ref{Thm_The_Counter-example} below that
$J$ can be interpreted as the boundary $\partial E_\eps$ of an $\eps$-neighbourhood of a certain set $E \subset \R^2$. % and that $\partial E_\eps$ has no smooth points.

\begin{figure}[h]
      \captionsetup{margin=0.75cm}
      \centering      \vspace{-1mm}
                \begin{subfigure}{0.48\textwidth} \vspace{0mm}
%%%%%\hspace{6mm} %[b]{0.40\textwidth}
                \includegraphics[width=1.05\textwidth]{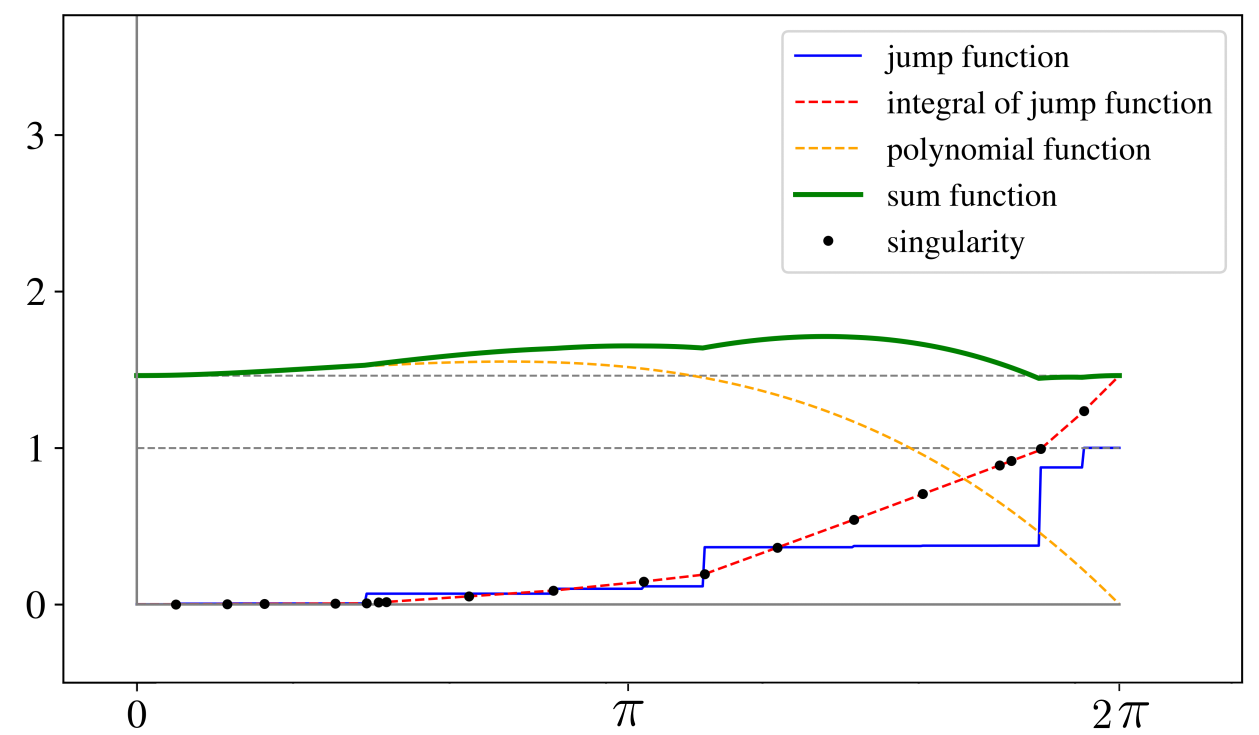} \\[-1mm]
                \caption{}
%%%%%                %\label{fig:Connected_components_Parrot}
                \end{subfigure} \quad %\qquad  \quad      
                \begin{subfigure}{0.48\textwidth} %\vspace{-6.8mm}%[b]{0.40\textwidth}
                %\hspace{-3.5mm}
                \includegraphics[width=1.05\textwidth]{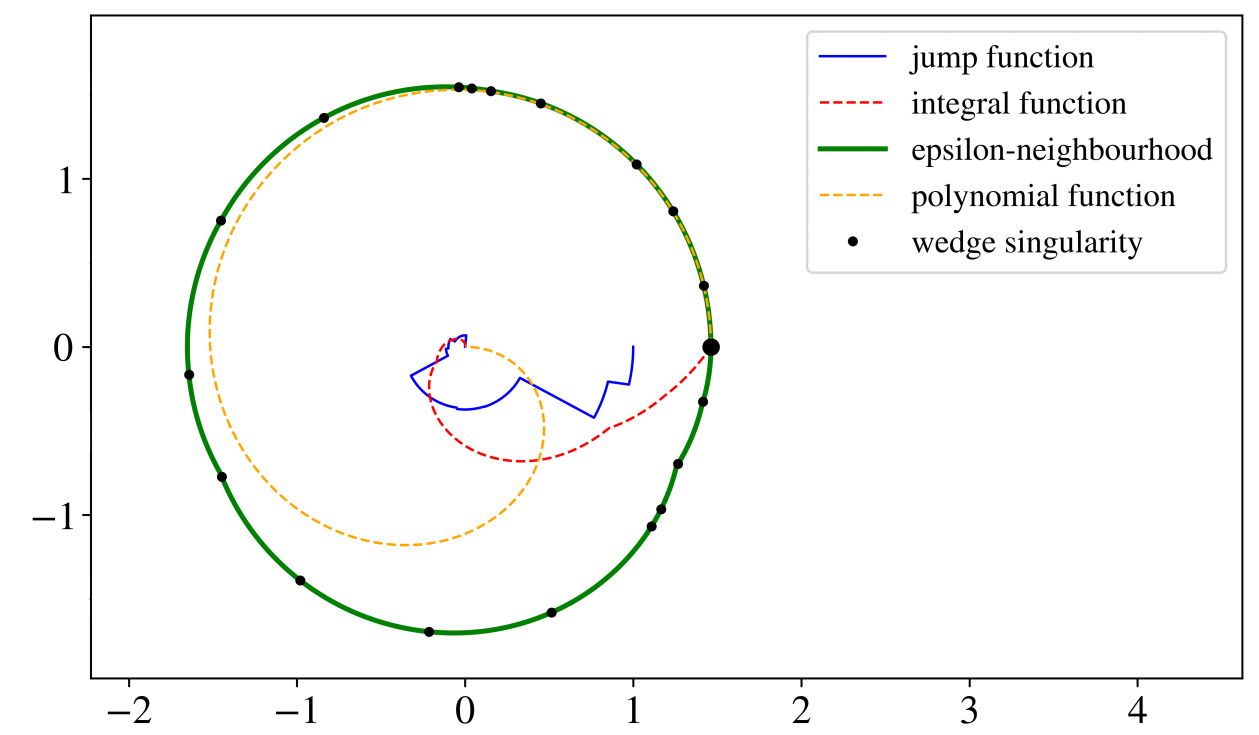} \\[-1mm] %\vspace{-3.5mm}
                \caption{}
%%%%%%                %\label{fig:Connected_components_Parrot_boundary}
                \end{subfigure}
                \caption{\textbf{(a)} The integral $I$ (red dashed line) of the jump function $f$ (blue line) has a singularity at each rational $q \in \Q \cap (0, 2\pi]$. The smooth function $P$ with bounded curvature (yellow dashed line) satisfies $P(0) = I(2\pi) = L$. The sum function $S = P + I$ (dark green) satisfies $S'(0) = P'(0) + I'(0) = m_{n(0)} > 0 = S'(2\pi)$.   %$\alpha'(2\pi) = -f'(2\pi) = -1$.
%%%%%                %matches the slope of $I$ at $x = 2\pi$.
                \textbf{(b)} In polar coordinates $(x, r)$, the $\eps$-neighbourhood boundary (dark green curve) is defined at each angle $x \in [0, 2\pi]$ by the radius
                $r(x) = P(x) + I(x)$. Wedges and shallow singularities are dense on the boundary. The black dots indicate the positions of the eighteen singularities with largest differences between the one-sided derivatives.} \label{Figure_Functions_and_Eps_Boundary}
\end{figure}

\subsection{Numerical visualisation}
For the sake of visualising the $\eps$-neighbourhood boundary $\alpha(\Gamma_S)$ corresponding to the graph $\Gamma_S$ of $S$, we now provide an example with explicit values for the sequence $(m_n)_{n=0}^\infty$ and a concrete smooth function $P$ in~\eqref{Eq_Def_S}. For each $n \in \N$, define $m_n := 2^{-n}$ so that $V = \sum_{n \in \N} m_n = 1$. Define $I$ as in~\eqref{Eq_Def_I_f}.\footnote{Note that the function $I$ (and consequently also the functions $P$ and $S$) depends on the particular enumeration $(q_n)_{n=0}^\infty$ of the rationals via~\eqref{Def_Set_N(x)}.} We now implicitly define a third degree polynomial $P: [0, 2\pi] \to \R$ by requiring that the sum function $S = I + P$ satisfies $S(0) = S(2\pi) = I$ as well as $S'(0) = f(0) = m_{n(0)}$ and $S'(2\pi) = 0$.
%\footnote{For the purposes of implementing this example numerically, we give an explicit representation for the function $P$ as a third degree polynomial. For the general argument, this particular form of the function is not required. It is sufficient that this function satisfies the boundary conditions~\eqref{Eq_Polynomial_Boundary_Conditions}, is twice differentiable and has bounded curvature on the interval $[0, 2\pi]$.}
%For any given values $I$ and $V$,
These equations imply
\begin{equation} \label{Eq_Polynomial_Boundary_Conditions}
P(0) = I, \quad P'(0) = 0, \quad P(2\pi) = 0, \quad \textrm{and} \quad P'(2\pi) = -V.
\end{equation}
The polynomial $P$ is completely determined by these properties as soon as $V$ and $I$ are fixed. It is then given by the expression
\begin{equation} \label{Eq_Def_P}
P(x) = \frac{I - \pi V}{4 \pi^3}x^3 + \frac{2V\pi - 3I}{4 \pi^2} x^2 + I.
\end{equation}
We have not required that $P''(0) \leq 0$, so $P$ can be increasing in some neighbourhood of zero. See Figure~\ref{Figure_Functions_and_Eps_Boundary} for a graphical illustration of the functions $f$, $I$, $P$, and $S$, as well as the corresponding $\eps$-neighbourhood boundary as the image $\alpha(\Gamma_S)$.

\subsection{Construction of the underlying set}
In section~\ref{Sec_Construction} we defined the Jordan curve $J \subset \R^2$ that we claimed is the boundary of an $\eps$-neighbourhood $E_\eps$ for some $E \subset \R^2$. It remains to be shown that such a set $E$ exists. The Jordan curve $J$ is by construction a star-shaped set (see Definition~\ref{Def_Star_shaped_set}) given by an image under the polar-coordinate transformation. We thus need a criterion ensuring that such sets are $\eps$-neighbourhoods of some underlying set. This criterion is provided in Proposition~\ref{Prop_Star-shaped_eps-neighbourhood} and it relies on Theorem~\ref{Thm_preserving_Pos_Reach}, due to Federer. 

%
%The general idea of the proof is to first demonstrate that the graph
%\begin{equation} \label{Eq_graph_of_S}
%\Gamma_S := \{(x, S(x)) \, : \, x \in [0, 2\pi]\}
%\end{equation} of the function $S$ in~\eqref{Eq_Def_S} can be interpreted as a subset of the boundary $\partial G_r$ of an $r$-neighbourhood $G_r$ for some set $G \subset [0, 2\pi] \times \R_+$ and $r > 0$. We then argue that the $r$-radius balls $\overline{B_r} \subset G_r$ touching each point $x \in \Gamma_S$ are mapped by the polar-coordinate transformation onto sets $\alpha(\overline{B_r})$ each of which can be interpreted as an $\eps$-neighbourhood $\overline{B_\eps(A(x))}$ for some underlying set $A(x)$, where $\eps > 0$ is independent of $x$. % and $x$-dependent set $A_x$.
%Since every point $z \in J$ is an image $\alpha(x)$ of some $x \in \Gamma_S$, it follows that $z$ lies on the boundary of one of the sets $A(x)$. Using the sets $A(x)$ it is then straightforward to define the set $E$ for which $J = \partial E_\eps$.

%On the technical level, we proceed with two preparatory results. We use the above steps to characterise, in terms of the polar-coordinate transformation, those star-shaped sets (with non-empty interior) that are $\eps$-neighbourhoods for some underlying set. It is then immediate that our construction in section~\ref{Sec_Construction} indeed provides the boundary of some $\eps$-neighbourhood in the plane.

\begin{theorem}[\textbf{Positive reach is preserved in bi-Lipschitz maps} \hspace{-0.1mm}{\cite[Theorem 4.19]{Federer_Curvature_measures}}] \label{Thm_preserving_Pos_Reach}
Let $A \subset \R^d$ and $\textrm{reach}(A) > t > 0$. In addition, let $s > 0$ and let
\[
\alpha: \{x \, : \, \mathrm{dist}(x, A) < s\} \to \R^d
\]
be an injective, continuously differentiable map such that $\alpha$, $\alpha^{-1}$, $D\alpha$ are Lipschitz, with respective constants $M, N$ and $P$. Then
\[
\mathrm{reach}[\alpha(A)] \geq \inf \big\{ sN^{-1}, (Mt^{-1} + P)^{-1} N^{-2})\big\}.
\]
\end{theorem}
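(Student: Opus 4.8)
The plan is to reduce the statement to Federer's tangent-cone characterisation of positive reach and transport the relevant estimate through $\alpha$ using the three Lipschitz bounds. Recall that for a closed set $C \subset \R^d$ and $r > 0$ one has $\mathrm{reach}(C) \ge r$ if and only if
\[
\dist\big(b - a, \Tan(C, a)\big) \le \frac{\norm{b-a}^2}{2r} \qquad \text{for all } a, b \in C,
\]
where $\Tan(C,a)$ denotes the tangent cone; this is~\cite[Theorem 4.18]{Federer_Curvature_measures}. Since $\alpha$ is $C^1$ and injective with Lipschitz inverse, $D\alpha(a)$ is invertible and tangent cones push forward linearly, $\Tan(\alpha(A), \alpha(a)) = D\alpha(a)\big[\Tan(A, a)\big]$. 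The strategy is therefore to verify the displayed inequality for the image set $\alpha(A)$, with $r$ equal to the second term in the claimed infimum, and to account for the restricted domain of $\alpha$ with the first term.

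First I would record the two elementary estimates feeding the computation. Because $D\alpha$ is $P$-Lipschitz, a first-order Taylor expansion along $[a,b]$ gives $\norm{\alpha(b) - \alpha(a) - D\alpha(a)(b-a)} \le \tfrac{P}{2}\norm{b-a}^2$ whenever $[a,b]$ lies in the domain of $\alpha$; because $\alpha$ is $M$-Lipschitz we have $\norm{D\alpha(a)} \le M$; and because $\alpha^{-1}$ is $N$-Lipschitz we have $\norm{b-a} \le N\norm{\alpha(b)-\alpha(a)}$. Writing $\alpha(b)-\alpha(a) = D\alpha(a)(b-a) + R$ with $\norm{R} \le \tfrac P2\norm{b-a}^2$ and using the tangent-cone estimate for $A$ (valid with $r = t$, since $\mathrm{reach}(A) > t$), I would bound
\[
\dist\big(\alpha(b)-\alpha(a),\, D\alpha(a)\Tan(A,a)\big) \le M\,\dist\big(b-a,\Tan(A,a)\big) + \norm{R} \le \tfrac12\big(\tfrac Mt + P\big)\norm{b-a}^2,
\]
and then replace $\norm{b-a}^2$ by $N^2\norm{\alpha(b)-\alpha(a)}^2$. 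This yields precisely the tangent-cone inequality for $\alpha(A)$ with $r = (Mt^{-1}+P)^{-1}N^{-2}$.

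The main obstacle is that this computation is only legitimate locally, and reconciling it with the global definition of reach is where the first term $sN^{-1}$ enters. The Taylor step requires $[a,b]$ to lie in the tube $U := \{x : \dist(x,A) < s\}$ on which $\alpha$ is defined, which may fail for far-apart points; moreover $\alpha(A)$ need not be closed in $\R^d$, only relatively closed in $\alpha(U)$. To handle both, I would argue pointwise and show $\mathrm{reach}(\alpha(A), \alpha(a)) \ge \min\{sN^{-1},\, (Mt^{-1}+P)^{-1}N^{-2}\}$ before taking the infimum over $a \in A$. The key points are that $B(a,s)$ is convex and contained in $U$, so for pairs $a', b' \in A \cap B(a,s)$ the segment stays in $U$ and the Taylor estimate applies; and that, since $\alpha^{-1}$ is $N$-Lipschitz, an openness-plus-relative-closedness argument along radial segments gives $B(\alpha(a), sN^{-1}) \subset \alpha(B(a,s)) \subset \alpha(U)$ with preimages landing in $B(a,s)$. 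Thus the nearest-point analysis for points within the claimed radius of $\alpha(a)$ only ever involves the well-controlled local piece $\alpha(A \cap B(a,s))$, where the tangent-cone estimate holds. The remaining work is bookkeeping of constants — in particular verifying that such nearest points genuinely remain inside $\alpha(B(a,s))$, which is absorbed into the conservative bound $sN^{-1}$ — and I expect no conceptual difficulty there.
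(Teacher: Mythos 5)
This statement is not proved in the paper at all: it is quoted verbatim from Federer (Theorem 4.19 of \emph{Curvature measures}) and used as a black box, so there is no internal proof to compare against. Judged on its own merits, your sketch reconstructs Federer's original argument: the forward and converse halves of the tangent-cone characterisation (Federer's Theorem 4.18), the inclusion $D\alpha(a)\big[\mathrm{Tan}(A,a)\big] \subset \mathrm{Tan}(\alpha(A),\alpha(a))$ (only this inclusion is needed, not the equality you assert), the Taylor remainder bound $\tfrac{P}{2}\norm{b-a}^2$, the operator bound $\norm{D\alpha(a)}\le M$, and the conversion $\norm{b-a}\le N\norm{\alpha(b)-\alpha(a)}$; the resulting constant $(Mt^{-1}+P)^{-1}N^{-2}$ comes out exactly as claimed. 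The core computation is correct.

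The one place where your plan is needlessly fragile is the globalisation step. You propose a pointwise local-reach argument with nearest-point localisation inside $\alpha(B(a,s))$; this can be made to work but is not where the clean proof goes. Two observations dissolve the difficulty. First, $\alpha(A)$ \emph{is} closed in $\R^d$: if $\alpha(a_i)\to y$ then $(a_i)$ is Cauchy because $\alpha^{-1}$ is $N$-Lipschitz, so $a_i\to a\in A$ and $y=\alpha(a)$; hence the converse half of Theorem 4.18 applies globally. Second, the quadratic inequality
\[
\dist\big(\alpha(b)-\alpha(a),\,\mathrm{Tan}(\alpha(A),\alpha(a))\big)\le \frac{\norm{\alpha(b)-\alpha(a)}^2}{2\rho},
\qquad \rho:=\inf\big\{sN^{-1},\,(Mt^{-1}+P)^{-1}N^{-2}\big\},
\]
is \emph{automatic} whenever $\norm{\alpha(b)-\alpha(a)}\ge 2\rho$, because $0$ lies in every tangent cone and so the left-hand side is at most $\norm{\alpha(b)-\alpha(a)}$. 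For the remaining pairs one has $\norm{b-a}\le N\norm{\alpha(b)-\alpha(a)}<2s$, so every point of the segment $[a,b]$ is within distance $s$ of $\{a,b\}\subset A$ and the Taylor estimate is legitimate. This is precisely the role of the first term $sN^{-1}$ in the infimum; with that replacement your argument closes without any nearest-point bookkeeping.
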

Essentially, Theorem~\ref{Thm_preserving_Pos_Reach} says that the property of a set $A$ having positive reach is preserved in mappings that can be defined in some $s$-neighbourhood of $A$ and are sufficiently smooth. The reach of the image will typically be different than that of the original set, but it remains positive.

%Throughout the paper we denote by $\alpha$ the polar coordinate map $\alpha : \R \times \R_+ \to \R^2$ given by
%\begin{equation} \label{Eq_Def_polar_map_1}
%\alpha(x, r) := r(\cos x, \sin x).
%\end{equation}

\begin{lemma} \label{Lemma_Positive_reach_of_images_of_balls}
Let $a,b > 0$, let $0 < \delta < \min\{a, \pi\}$ and let $\alpha$ be the polar coordinate transformation~\eqref{Eq_Def_polar_map}. Then there exists some $\eps > 0$ such that $\mathrm{reach}(\alpha ( \partial B_\delta)) \geq \eps$ for all $\delta$-radius balls $\overline{B_\delta} \subset \R \times [a,b]$. It follows that for each such ball there exists some $E \subset \alpha(\overline{B_\delta})$ for which $\alpha (\overline{B_\delta}) = \overline{\bigcup_{y \in E} B_\eps(y)}$.
\end{lemma}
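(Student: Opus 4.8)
The plan is to prove the two assertions separately: obtain the uniform reach bound from Federer's Theorem~\ref{Thm_preserving_Pos_Reach}, and then reconstruct the solid image $\alpha(\overline{B_\delta})$ as an $\eps$-neighbourhood by an inner-parallel (erosion) argument. For the first part, observe that the circle $A := \partial B_\delta$ has $\mathrm{reach}(A) = \delta$, since the metric projection onto a circle of radius $\delta$ is unique away from its centre. I would fix $t := \delta/2 < \mathrm{reach}(A)$ and choose $s := \tfrac12 \min\{\pi - \delta,\, a - \delta,\, \delta\} > 0$, and apply Theorem~\ref{Thm_preserving_Pos_Reach} to $A$ with this $t$ and $s$, restricting $\alpha$ to $\{p : \mathrm{dist}(p, A) < s\}$. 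The role of $s$ is that this set lies in a horizontal strip of width $2(\delta + s) < 2\pi$ and in $\R \times (0, b+s]$, so $\alpha$ restricted to it is injective and $C^1$. The only genuinely uniform observation is that, because $\alpha(x,r) = r(\cos x, \sin x)$ is $2\pi$-periodic in $x$, one has $D\alpha^{\mathsf T} D\alpha = \mathrm{diag}(r^2, 1)$, so the singular values of $D\alpha$ are $\max(r,1)$ and $\min(r,1)$ and depend only on $r$; hence the Lipschitz constants $M$ of $\alpha$, $N$ of $\alpha^{-1}$ and $P$ of $D\alpha$ can all be bounded in terms of the range $r \in [a-s,\, b+s]$ alone, independently of the centre of the ball. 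Federer's formula then gives a single $\eps_0 := \inf\{sN^{-1},\, (Mt^{-1}+P)^{-1}N^{-2}\} > 0$ with $\mathrm{reach}(\alpha(\partial B_\delta)) \ge \eps_0$ for every admissible ball; setting $\eps := \eps_0/2$ yields the first claim with the strict inequality $\mathrm{reach}(C) > \eps$, where $C := \alpha(\partial B_\delta)$.

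For the second assertion I would fix such a ball, write $K := \alpha(\overline{B_\delta})$, and define the inner parallel set
\[
E := \{y \in K : \mathrm{dist}(y, \partial K) \ge \eps\}.
\]
Since $\alpha$ is a homeomorphism onto its image, $\partial K = C$, and the inclusion $\overline{\bigcup_{y \in E} B_\eps(y)} \subseteq K$ is immediate: for $y \in E$ the ball $B_\eps(y)$ is disjoint from $\partial K$, is connected and meets $\mathrm{int}(K)$, hence lies in $\mathrm{int}(K)$. The reverse inclusion is where $\mathrm{reach}(C) > \eps$ enters. A point $z \in K$ with $\mathrm{dist}(z, \partial K) \ge \eps$ lies in $E$, hence in the union. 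If instead $d := \mathrm{dist}(z, \partial K) < \eps$, then $z$ has a unique nearest point $p \in C$, and by the first-order optimality condition $z = p + d\nu$ for the inward unit normal $\nu$ at $p$ (which exists, since positive reach forces $C$ to be $C^{1,1}$). The classical positive-reach fact that $\mathrm{dist}(p + \tau\nu, C) = \tau$ with nearest point $p$ for all $\tau < \mathrm{reach}(C)$ shows that $y := p + \eps\nu$ satisfies $\mathrm{dist}(y, \partial K) = \eps$, so $y \in E$, while $\|z - y\| = \eps - d < \eps$ gives $z \in B_\eps(y)$. This proves $K \subseteq \overline{\bigcup_{y \in E} B_\eps(y)}$, and hence equality. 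Non-emptiness of $E$ follows from $\|\alpha(u) - \alpha(v)\| \ge N^{-1}\|u - v\|$, which gives $\mathrm{dist}(\alpha(c), C) \ge \delta N^{-1}$ for the image of the centre $c$; the constraint $s \le \delta$ then forces $\eps \le \eps_0 \le sN^{-1} \le \delta N^{-1}$, so $\alpha(c) \in E$.

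I expect the main obstacle to be the reconstruction step rather than the reach estimate. The reach bound is a direct application of Theorem~\ref{Thm_preserving_Pos_Reach} once one notices that periodicity in $x$ makes $M$, $N$ and $P$ uniform in the ball's position. By contrast, the identity $K = \overline{\bigcup_{y \in E} B_\eps(y)}$ rests on deploying the normal-ray and unique-projection structure of sets of positive reach correctly, including the verification that the constructed centre $y$ lands in $\mathrm{int}(K)$ with $\overline{B_\eps(y)} \subseteq K$; this is the part where care is needed to avoid circularity with the boundary-case $\tau = \mathrm{reach}(C)$, which is precisely why I pass from $\eps_0$ to the strictly smaller $\eps$.
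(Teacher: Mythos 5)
Your proposal is correct and takes essentially the same route as the paper: apply Federer's Theorem~\ref{Thm_preserving_Pos_Reach} to the circle $\partial B_\delta$ (reach $\delta$), using periodicity of $\alpha$ to make the Lipschitz constants, and hence $\eps$, uniform over the position of the ball, and then define $E$ as the inner parallel set $\{y \in \alpha(\overline{B_\delta}) : \mathrm{dist}(y, \alpha(\partial B_\delta)) \geq \eps\}$. The only difference is one of detail: the paper asserts the identity $\alpha(\overline{B_\delta}) = \overline{\bigcup_{y \in E} B_\eps(y)}$ without argument, whereas you justify the nontrivial inclusion via the normal-ray/unique-projection structure of positive reach, which is a legitimate filling-in of the same step rather than a different method.
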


\begin{proof}
The result is a direct application of Theorem~\ref{Thm_preserving_Pos_Reach} once we check that the assumptions therein hold. Clearly, the reach of the circle $\partial B_\delta$ is defined by the distance from the boundary to the center of the circle, so $\mathrm{reach}(\partial B_\delta) = \delta$. Since $\alpha$ is periodic in the first argument, it follows that $\alpha$, $\alpha^{-1}$ and $D\alpha$ have bounded Lipschitz constants in the set $\R \times [a,b]$. For $s := (\min\{a, \pi\} - \delta) / 2$, the function $\alpha$ is injective in the $s$-neighbourhood of $\partial B_\delta$, and the aforementioned Lipschitz constants are clearly bounded also in this set. It then follows from Theorem~\ref{Thm_preserving_Pos_Reach} that there exists some $\eps > 0$ for which $\mathrm{reach}(\alpha(\partial B_\delta)) \geq \eps$.
It remains to prove the second claim. Let $\eps$ be as above and define
$E := \big\{ z \in \alpha\big(\overline{B_\delta}\big) \, : \, \mathrm{dist}(z,  \alpha\big(\partial B_\delta \big) \big) \geq \eps \big\}$. Clearly $E \subset \mathrm{int}\textrm{ }\alpha\big(\overline{B_\delta}\big)$ and since $\mathrm{reach}(\alpha ( \partial B_\delta)) \geq \eps$ it follows that $\overline{B_\eps(y)} \subset \alpha\big(\overline{B_\delta}\big)$ for all $y \in E$. Then $\alpha (\overline{B_\delta}) = \overline{\bigcup_{y \in E} B_\eps(y)}$, which was to be shown.
\end{proof}

We now recall the definition of star-shaped sets.
\begin{definition}[Star-shaped set] \label{Def_Star_shaped_set}
A subset $A$ of a vector space is \emph{star shaped}, if there exists some $a_0 \in A$ for which the line segment $[a_0, x]$ between $a_0$ and $x$ is contained in $A$ for every $x \in A$. The collection of points $a_0$ with this property is called the \emph{kernel} of $A$, and is denoted $\textrm{Ker}(A)$.
\end{definition}
Let $X$ be a set and $Y$ an ordered set. The \emph{epigraph} and \emph{hypograph} of a function $f : X \to Y$ are the sets
\[
\mathrm{epi} f := \big\{(x, y) \in X \times Y \, : \, y > f(x) \big\} \qquad
\mathrm{hypo} f := \big\{(x, y) \in X \times Y \, : \, y < f(x) \big\}.
\]
Geometrically, these sets correspond to the areas above and below the graph of $f$, respectively.

\begin{prop} \label{Prop_Star-shaped_eps-neighbourhood}
Let $A \subset \R^2$ be a closed star-shaped set whose kernel has a non-empty interior $\mathrm{int}\textrm{ }\mathrm{Ker}(A) \neq \varnothing$. Let $\alpha$ be the polar coordinate transformation~\eqref{Eq_Def_polar_map}. Then $A$ is an $\eps$-neighbourhood $\overline{B_\eps(E)}$ of some set $E \subset A$ if and only if there exists a $2\pi$-periodic function $f: \R \to \R_+$ for which %and some $r > 0$ for which
\begin{itemize}
 %\item[(i)] $f(0) = f(2\pi)$,
 \item[(i)] $\mathrm{min}_{x \in [0, 2\pi]} f(x) > 0$,
 \item[(ii)] $\mathrm{reach}(\mathrm{epi} f) > 0$, and
 \item[(iii)] $A = a + \alpha(\mathrm{hypo} f)$ for some $a \in \mathrm{int}\textrm{ }\mathrm{Ker}(A)$.
\end{itemize}
\end{prop}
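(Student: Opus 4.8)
The plan is to route both implications through a single geometric quantity, the reach of the closed complement $\overline{\R^2\setminus A}$, by establishing two bridges. Throughout I assume $A$ is compact (so that its radial function is finite); after translating by $-a$ for some fixed $a\in\operatorname{int}\operatorname{Ker}(A)$ I may take the star-centre to be the origin. The first bridge is the equivalence \emph{$A=\overline{B_\eps(E)}$ for some $E\subset A$ and some $\eps>0$} $\iff$ \emph{$\mathrm{reach}(\overline{\R^2\setminus A})>0$}. The second bridge is \emph{$\mathrm{reach}(\overline{\R^2\setminus A})>0\iff\mathrm{reach}(\mathrm{epi}\,f)>0$}, obtained by transporting reach across the bi-Lipschitz polar map $\alpha$ using Theorem~\ref{Thm_preserving_Pos_Reach}. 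Conditions (i) and (iii) will then be pure bookkeeping attached to the definition of $f$, and the only analytic content is (ii).

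First I define the candidate function. Because $A$ is star-shaped about the origin with the origin in the interior of its kernel, every ray from the origin meets $A$ in a segment, so the radial function $f(\theta):=\sup\{r\ge0:\,r(\cos\theta,\sin\theta)\in A\}$ is finite, $2\pi$-periodic, and satisfies $A=\alpha(\mathrm{hypo}\,f)$ (interpreting $\alpha$ on $\{r\ge0\}$), which is exactly (iii). A ball $B_\rho(0)\subset A$ coming from $0\in\operatorname{int}\operatorname{Ker}(A)$ forces $f\ge\rho>0$, giving (i). Thus (i), (iii) and periodicity are immediate, and it remains only to identify (ii) with the property of being an $\eps$-neighbourhood, which is what the two bridges accomplish.

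The first bridge is the heart of the argument. For the forward direction, suppose $A=\overline{B_\eps(E)}$. Then $\partial A\subset\{z:\mathrm{dist}(z,E)=\eps\}$, so every $p\in\partial A$ carries an \emph{inner tangent ball}: there is $c$ with $\overline{B_\eps(c)}\subset A$ and $\|p-c\|=\eps$. I claim this forces $\mathrm{reach}(\overline{\R^2\setminus A})\ge\eps$. Indeed, if some interior point $w$ with $d:=\mathrm{dist}(w,\partial A)<\eps$ had two nearest boundary points $p_1\ne p_2$, then tangency of the inner balls pins $w=c_i+(1-d/\eps)(p_i-c_i)$, whence $\|w-c_i\|=\eps-d$; writing the distinct unit normals $n_i:=(p_i-c_i)/\eps$, a direct computation gives $\|p_2-c_1\|^2=d^2+(\eps-d)^2+2d(\eps-d)\,n_1\!\cdot n_2<\eps^2$ since $n_1\!\cdot n_2<1$, so $p_2\in B_\eps(c_1)\subset\operatorname{int}A$, contradicting $p_2\in\partial A$. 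For the reverse direction, given $\mathrm{reach}(\overline{\R^2\setminus A})\ge\eps'>0$ I set $E:=\{z\in A:\mathrm{dist}(z,\partial A)\ge\eps'\}$ and recover $A=\overline{B_{\eps'}(E)}$ exactly as in Lemma~\ref{Lemma_Positive_reach_of_images_of_balls}: the inclusion $\overline{B_{\eps'}(E)}\subset A$ is immediate, while the tubular-neighbourhood property of positive reach lets me slide each near-boundary point along its unique inward normal to a point of $E$ at distance $<\eps'$, giving the reverse inclusion.

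The second bridge is a routine application of Theorem~\ref{Thm_preserving_Pos_Reach}, with the only care needed being the geometry of the polar map. Near $\partial A$ one has $\overline{\R^2\setminus A}=\alpha(\overline{\mathrm{epi}\,f})$, and by (i) together with compactness this boundary lies in an annulus $\{c_1\le|x|\le c_2\}$ with $0<c_1\le c_2<\infty$, on which $\alpha$, $\alpha^{-1}$ and $D\alpha$ have uniformly bounded Lipschitz constants (using $2\pi$-periodicity in the angular variable). Applying Theorem~\ref{Thm_preserving_Pos_Reach} to $\alpha$ and to $\alpha^{-1}$ transports positive reach in both directions; since reach is determined locally at the graph---the epigraph being convex, hence of infinite reach, far above $f$---the unboundedness of $\mathrm{epi}\,f$ is harmless. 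This yields $\mathrm{reach}(\mathrm{epi}\,f)>0\iff\mathrm{reach}(\overline{\R^2\setminus A})>0$, and chaining the two bridges proves the proposition. The main obstacle is the first bridge, specifically the inner-tangent-ball computation and its dual ball-filling reconstruction; by contrast the polar transport, while requiring the annular bookkeeping above, follows directly from the quoted theorem.
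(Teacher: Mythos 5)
Your architecture differs from the paper's (the paper never introduces $\mathrm{reach}(\overline{\R^2\setminus A})$; it proves the direction it needs by rolling $\delta$-balls under the graph of $f$ inside the strip and pushing each \emph{small} ball through $\alpha$ via Lemma~\ref{Lemma_Positive_reach_of_images_of_balls}), and your ``first bridge'' contains a genuine error. The forward claim $A=\overline{B_\eps(E)}\Rightarrow\mathrm{reach}(\overline{\R^2\setminus A})\ge\eps$ is false: take $E=\{(-a,0),(a,0)\}$ with $0<a<\eps$, so that $A=E_\eps$ is the union of two overlapping $\eps$-balls (star-shaped about the origin, with kernel of non-empty interior). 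The origin lies at distance $\sqrt{\eps^2-a^2}<\eps$ from $\partial A$ and has \emph{two} nearest boundary points, namely the wedge vertices $(0,\pm\sqrt{\eps^2-a^2})$, so $\mathrm{reach}(\overline{\R^2\setminus A})\le\sqrt{\eps^2-a^2}$, which tends to $0$ as $a\to\eps$. The step that breaks is your ``tangency pins $w=c_i+(1-d/\eps)(p_i-c_i)$'': a nearest boundary point $p_i$ of $w$ need not be aligned with the centre $c_i$ of an inner tangent $\eps$-ball at $p_i$ — in the example above, the segment from $(a,0)$ to $(0,\sqrt{\eps^2-a^2})$ does not pass through the origin. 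Two balls contained in $A$ and sharing the boundary point $p_i$ need not be internally tangent to each other there (at a wedge singularity there are several inner tangent $\eps$-balls with distinct normals). Since the quantitative bound fails, your proof does not even establish the qualitative statement $\mathrm{reach}(\overline{\R^2\setminus A})>0$, and whether that holds for every star-shaped $\eps$-neighbourhood is exactly the kind of global ``narrow neck'' question your argument would need to control.

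The ``second bridge'' is also not a routine application of Theorem~\ref{Thm_preserving_Pos_Reach}: that theorem requires $\alpha$ to be \emph{injective} on an $s$-neighbourhood of the set, and no neighbourhood of the full $2\pi$-periodic epigraph (or of one fundamental period without creating artificial boundary) satisfies this. This is precisely the obstruction the paper's Lemma~\ref{Lemma_Positive_reach_of_images_of_balls} is built to circumvent, by applying Federer's theorem only to balls of radius $\delta<\pi$, where injectivity holds on an $s$-neighbourhood, and then assembling $E$ as the union of the sets $E_x^\pm$ produced for each tangent ball. The two-ball example above also shows why the global step matters: the reach of $\overline{\R^2\setminus A}$ is degraded by pairs of boundary points that are far apart in the angular variable but close in the plane, a phenomenon invisible to any purely local transfer along the graph. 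Your reverse reconstruction ($E:=\{z\in A:\mathrm{dist}(z,\partial A)\ge\eps'\}$ plus sliding along unique normals) is fine \emph{given} $\mathrm{reach}(\overline{\R^2\setminus A})\ge\eps'>0$, but as it stands neither bridge delivering that hypothesis is proved. To repair the argument you would either have to follow the paper's local ball-covering route, or carry out a careful quantitative version of both bridges in which $\eps'$ is taken small relative to $\min f$ so that near-boundary points with multiple projections are forced to have angularly close projections; neither is done in the proposal.
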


\begin{proof}
\textbf{$\Rightarrow$:} Assume first that $A$ is a star-shaped set whose kernel contains an open set, and that the function $f$ described in the proposition exists.
%According to the assumptions, $A = a + \alpha(\mathrm{hypo} f)$ for some $a \in \mathrm{int}\textrm{ }\mathrm{Ker}(A)$.
To show that $A$ is an $\eps$-neighbourhood of some set $E$, it suffices to find a set $G$ and $\eps > 0$ for which $\overline{B_\eps(G)} \subset A$ and  $\partial A \subset \partial B_\eps(G)$. The required set $E$ can then be obtained by adding to $G$ any interior points of $A$ whose distance to the boundary is greater than $\eps$.

Clearly $\partial A = \alpha (\Gamma_f)$ where $\Gamma_f$ is the graph of $f$. Since $\mathrm{reach} (\mathrm{epi} f) > 0$, it follows from~\cite[Theorem 6.4]{Rataj_Zajicek_On_the_Structure_of} that $f$ is Lipschitz. Therefore, $f$ has at every point $x \in [0, 2\pi]$ the one-sided derivatives $\partial_-f(x), \partial_+f(x)$ and corresponding left and right unit normal vectors $\overline{n}^\pm(x)$, in the direction of the hypograph of $f$. Let $m := \min_{x \in [0, 2\pi]}\{f(x)\}$ and $M := \max_{x \in [0, 2\pi]}\{f(x)\}$
%\[
%m := \min_{x \in [0, 2\pi]}\{f(x)\}, \qquad  M := \max_{x \in [0, 2\pi]}\{f(x)\}
%5\]
and define
\[
\delta := \min\{\mathrm{reach} (\mathrm{epi} f), m, \pi\} / 4.
\]
Then, for all $x \in [0, 2\pi]$, the $\delta$-radius balls $\overline{B_\delta\big(z_x^\pm\big)}$ centered at $z_x^\pm := (x, f(x)) + \delta \overline{n}^\pm(x)$ satisfy
\begin{equation}
\overline{B_\delta\big(z_x^\pm\big)} \subset \big\{ (x, y) \in \mathrm{hypo} f \, : y \geq m/2 \big\}.
\end{equation}
In particular, each such ball is contained in the set $\R \times [m/2,M]$. Thus, it follows from Lemma~\ref{Lemma_Positive_reach_of_images_of_balls} that there exists some $\eps > 0$ such that for all $x \in [0, 2\pi]$ we have
\[
\alpha\big(\overline{B_\delta\big(z_x^\pm\big)}\big) = \overline{\bigcup_{y \in E_x^\pm} B_\eps(y)}
\]
for some $x$-dependent sets $E_x^\pm \subset \mathrm{int}\textrm{ }\overline{\alpha\big(B_\delta\big(z_x^\pm\big)\big)}$. Let $G := \bigcup_{x \in [0, 2\pi]} E_x^\pm$. Then clearly $\overline{B_\eps(G)} \subset A$ and $\partial A \subset \partial B_\eps(G)$ from which the result follows.

\textbf{$\Leftarrow$:} We sketch the proof of the opposite direction, which we will not need for the subsequent argument. Assume that $A$ is a star-shaped set whose kernel contains an open set, and that $A = \overline{B_\eps(E)}$ for some set $E$. Without loss of generality we can assume that $0 \in \mathrm{int}\textrm{ }\mathrm{Ker}(A)$. Since $A$ is star shaped, every point $z \in \partial A$ can be connected to $0$ through a line segment. The lengths of these line segments then define a function $f : [0, 2\pi] \to \R_+$ which can be extended into a $2\pi$ periodic function on the real line. Since $\textrm{ }\mathrm{Ker}(A)$ contains an open set, this $f$ satisfies $\mathrm{min}_{x \in [0, 2\pi]} f(x) > 0$. To show that $\mathrm{reach}(\mathrm{epi} f) > 0$, one can follow the approach above by considering the inverse images under the map $\alpha$ of $\eps$-balls $B_\eps$ touching the boundary $\partial A$ and using Theorem~\ref{Thm_preserving_Pos_Reach} to obtain some $\delta > 0$ for which $\mathrm{reach}\big(\alpha^{-1}(\partial B_\eps)\big) > \delta$. %For $f: [0, 2\pi] \to \R_+$ whose graph satisfies $\Gamma_f = \alpha^{-1}(\partial A)$,
One can then show $f$ to be semiconvex due to the fact that wedge singularities correspond to jump discontinuities of the derivative $f'$ in only the positive direction, and the curvature of $f$ (where defined) is bounded from below by $1/\delta$. It then follows from~\cite[Theorem 2.3]{Fu_Tubular_neighborhoods} that $\mathrm{reach}(\mathrm{epi} f) > 0$.
\end{proof}

We now proceed to prove our main result by applying Proposition~\ref{Prop_Star-shaped_eps-neighbourhood} to the construction given in Section~\ref{Sec_Construction}.

\begin{theorem_without_2}[\textbf{Existence of a nowhere-smooth boundary}] \label{Thm_main_existence}
There exists a planar set $E \subset \R^2$ and $\eps > 0$ for which the boundary $\partial E_\eps$ consists only of wedges and shallow singularities.
%(accumulation points of wedges).
In particular, no subset of $\partial E_\eps$ is diffeomorphic to a $C^1$-smooth curve.
\end{theorem_without_2}

\begin{proof}
Let $f$, $I$, $S$, and $P$ be as in~\eqref{Eq_Def_f}, \eqref{Eq_Def_I_f}, and~\eqref{Eq_Def_S}, respectively, and let $\alpha$ be the polar coordinate map in~\eqref{Eq_Def_polar_map}. Consider the image $A := \alpha(\mathrm{hypo}\textrm{ }S)$ and the corresponding boundary $J = \partial A = \alpha\big(\Gamma_S\big)$ where
\begin{equation} \label{Eq_Non-smooth_Jordan_curve}
\Gamma_S := \big\{\big(x, S(x)\big) : x \in [0, 2\pi] \big\}
\end{equation}
is the graph of $S$. Since $S$ has singularities at every rational argument, and these are mapped onto wedge singularities by the transformation $\alpha$, it suffices to show that $J = \partial B_\eps(E)$ for some $E$. Proposition~\ref{Prop_Star-shaped_eps-neighbourhood} guarantees the existence of such a set $E$ once we demonstrate that $S$ satisfies the conditions (i)--(iii) listed therein. Out of these, (i) and (iii) follow directly from the definitions of $S$ and $A$. We now show that $\mathrm{epi}\textrm{ }S$ is a set with positive reach.

According to\cite[Theorem 2.3]{Fu_Tubular_neighborhoods}, the epigraph of a locally Lipschitzian function $f$ is a set with positive reach if $f$ is semiconvex. A function $f$ is semiconvex if there exists $C > 0$ such that the function $g(x) := f(x) + (C/2)x^2$ is convex~\cite[Definition 2.1]{Rataj_Zajicek_On_the_Structure_of}. Intuitively, this means that even if $f$ is not convex, it can be made convex by adding to it pointwise a sufficiently steeply increasing second-degree polynomial $P_C(x) := (C/2)x^2$. In our case, the function $I$ is convex by construction, and the smooth function $P$ was assumed to have bounded curvature on $[0, 2\pi]$. The second derivative of $P + P_C$ can thus be made positive by choosing a sufficiently large $C > 0$. Therefore, $S$ is semiconvex and $\mathrm{reach}(\mathrm{epi}\textrm{ }S) > 0$.
\end{proof}

\section{Absence of Curvature in a Set with Positive Hausdorff Dimension} \label{Sec_Absence_of_curvature}
In this section we augment the construction given in Section~\ref{Sec_Construction} and produce an example of an $\eps$-neighbourhood boundary that has the properties described in Theorem~\ref{Thm_main_existence}, but in addition, contains a subset of positive Hausdorff dimension in which curvature is not defined in the classical sense. We construct the example by defining on the interval $[0, 2\pi]$ a real-valued function that we then add pointwise to the analogous function $I$ defined in Section~\ref{Sec_Construction}. We then verify that this addition preserves the local geometric properties of the graphs of each function. Finally, we again apply the polar-coordinate transformation, and utilise Proposition~\ref{Prop_Star-shaped_eps-neighbourhood} to ensure that the resulting plane curve is the $\eps$-neighbourhood $E_\eps$ for some set $E \in \R^2$ and $\eps > 0$.
%In this section we construct a curve that can be expressed as a subset of the boundary of an $\eps$-neighbourhood and on which curvature is not defined in an uncountable subset.
%
%Let $C \subset [0,1]$ be the Cantor $1/3$-set, see~\cite[Example 6.15]{AliPrantis_Burkinshaw_Principles_of_Real_Analysis}.
%

Before we embark on the construction, we recall the definition of the Cantor function. The following definition is from~\cite{DovMarRyaVuo2006}.
\begin{definition}[Cantor function] \label{Def_Cantor_function}
Let $x \in [0, 1]$ and expand $x$ as
\begin{equation} \label{Def_Cantor_expansion}
x = \sum_{n=1}^\infty \frac{a_n(x)}{3^n}, a_n(x) \in \{0,1,2\}.
\end{equation}
Denote by $N_x$ the smallest $n$ with $a_n(x) = 1$ if it exists and put $N_x = \infty$ if there is no such $a_n(x)$. Then the Cantor function $G : [0, 1] \to \R$ is given by the expression
\begin{equation} \label{Eq_Def_Cantor_function}
G(x) := \frac{1}{2^{N_x}} + \frac{1}{2} \sum_{n=1}^{N_x - 1} \frac{a_n(x)}{2^n}.
\end{equation}
%Observe that it is independent of the choice of expansion~\eqref{Def_Cantor_expansion} if $x$ has two ternary representations.
\end{definition}
%$G: [0,1] \to [0,1]$.
Let $M \in [0,1]$ be the set of points at which $G$ fails to have a finite or infinite derivative. It is known that the Hausdorff dimension of $M$ satisfies $\textrm{dim}_\mathcal{H}(M) = \lg 2 / \lg 3$, see~\cite[Theorem 8.17]{DovMarRyaVuo2006}. In particular, $M$ is an uncountable set.

%\begin{example}[\textbf{Absence of curvature in a set with positive Hausdroff dimension}] \label{Ex_absence_of_curvature}
%We take the Cantor function $G : [0,1] \to \R$ as the basis for constructing a plane curve that can be interpreted as a boundary subset of some $\eps$-neighbourhood $\overline{B_\eps(E)}$.

\begin{figure}[h]
      \captionsetup{margin=0.75cm}
      \centering     % \vspace{-1mm}
%%%%%\hspace{6mm} %[b]{0.40\textwidth}
                \includegraphics[width=0.75\textwidth]{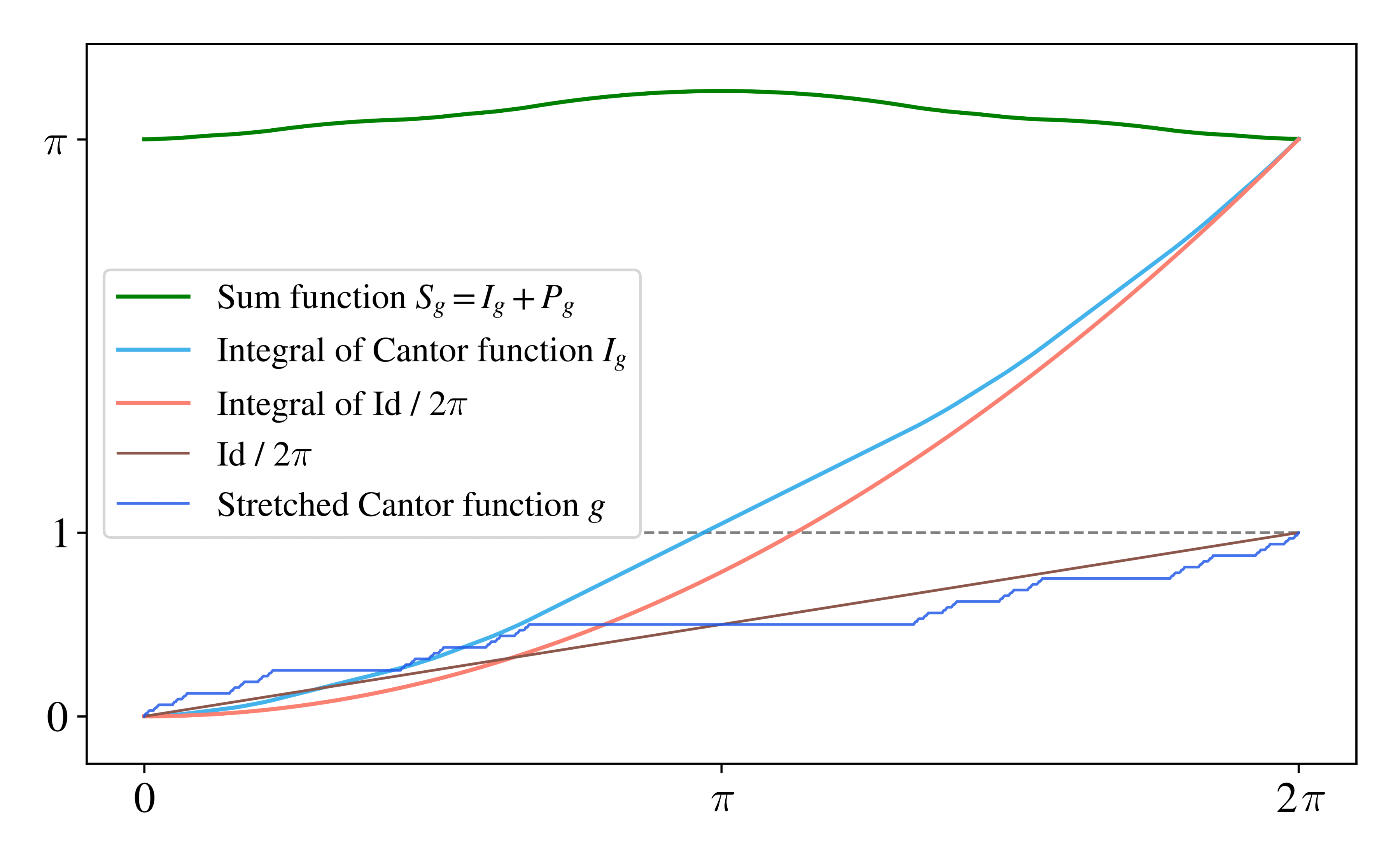} \\[-1mm]
                \caption{Illustration of the functions defined in the proof of Theorem~\ref{Thm_non-existence_of_curvature}.}
\label{Fig_Cantor_integral}
\end{figure}

\begin{theorem_without_2}[\textbf{Existence of a nowhere-smooth boundary with non-existing second derivative in an uncountable set}] \label{Thm_non-existence_of_curvature}
There exists a planar set $E \subset \R^2$ and $\eps > 0$ for which
\begin{itemize}
    \item[(i)] the boundary $\partial E_\eps$ consists only of wedges and shallow singularities, and
    \item[(ii)] there exists a subset $C \subset \partial E_\eps$ with positive Hausdorff dimension, on which curvature is not defined in the classical sense.
\end{itemize}
%(accumulation points of wedges).
%In particular, no subset of $\partial E_\eps$ is diffeomorphic to a $C^1$-smooth curve.
\end{theorem_without_2}

\begin{proof}
Let $G$ be the Cantor function~\eqref{Eq_Def_Cantor_function}, and define $g : [0, 2\pi] \to [0,1]$ by setting $g(x) := G(x / 2\pi)$. Let $I_g : [0,2\pi] \to \R$ be the integral function of $g$ with $I_g(x) := \int_0^x g(t) dt$. Then the second derivative $I_g''$ is not defined precisely in the subset $M' := \big\{ 2\pi x \, | \, x \in M\big\}$ for which $g$ fails to have the first derivative. Since the $k$-dimensional Hausdorff measure scales by a factor of $\lambda^k$ in homotheties defined by a scaling factor $\lambda$, it follows that $\textrm{dim}_\mathcal{H}(M') = \textrm{dim}_\mathcal{H}(M) = \lg 2 / \lg 3$.
%As mentioned above, the Hausdorff dimension of this set satisfies $\textrm{dim}_\mathcal{H}(M) = \lg 2 / \lg 3$.

The graph of the Cantor function $G$ possesses point symmetry relative to the symmetry center $\big(\frac{1}{2}, \frac{1}{2}\big)$. It is easy to observe from this that $I_g(x) \geq \frac{1}{4\pi}x^2 = \int_0^x \textrm{Id}(t/2\pi)dt$ for all $x \in [0, 2\pi]$ and that $I_g(2\pi) = \frac{1}{4\pi}(2\pi)^2 = \pi$.
%Note that $|I_g(x) - \frac{1}{4\pi}x^2| \geq I_g\big(\frac{1}{2}\big) - \frac{1}{4\pi}\big(\frac{1}{2}\big)^2$.
Thus, by adding to $I_g$ the second degree polynomial $P_g(x) := -\frac{1}{4\pi}x^2 + \pi$ results in a semiconvex function $S_g := I_g + P_g$ for which $\min_{x\in [0, 2\pi]} S_g(x) = \pi$. See Figure~\ref{Fig_Cantor_integral} for illustrations of the functions $g$, $I_g$, and $S_g$.

Now, let $S$ be as in~\eqref{Eq_Def_S}. It was argued in the proof of Theorem~\ref{Thm_main_existence} that $S$ is semiconvex and that $\textrm{min}_{x \in [0, 2\pi]}S(x) > 0$. Since the sum of two convex functions is convex, we obtain that the sum function $V := S + S_g$ is semiconvex and satisfies $\textrm{min}_{x \in [0, 2\pi]}V(x) > \pi$. Thus, $\textrm{epi } V$ has positive reach due to~\cite[Theorem 2.3]{Fu_Tubular_neighborhoods}. It then follows from Proposition~\ref{Prop_Star-shaped_eps-neighbourhood} that the image $\alpha(\Gamma_V)$ under the polar-coordinate transformation $\alpha$ in~\eqref{Eq_Def_polar_map} coincides with the $\eps$-neighbourhood boundary $\partial E_\eps$ for some $E \subset \R^2$. Since $\alpha$ is smooth, it preserves the sets of (non)-existence of the first and second derivatives. Furthermore, because $\alpha$ is bi-Lipschitz, it preserves the Hausdorff dimension of these sets, see for instance~\cite[Proposition 2.49]{Ambrosio_et_al_Functions_of_Bounded_Variation}. From this, the claim follows.
\end{proof}

Finally we note that the construction in the proof of Theorem~\ref{Thm_non-existence_of_curvature} can be easily modified so that the subset on the boundary where curvature is not defined is dense on $\partial E_\eps$. This can be done by iteratively replacing the plateaus of the stretched Cantor function -- for example the constant value on the interval $\big(\frac{2\pi}{3}, \frac{4\pi}{3}\big)$ -- by smaller copies of the Cantor function, proceeding from the wider intervals to shorter ones. The replaced parts must be appropriately scaled so that the value of the resulting function remains finite at the end of the interval $[0, 2\pi]$. This can be ensured because the original function will only be modified in a countable collection of subintervals.

\section{Discussion}
We have shown that there exist planar $\eps$-neighbourhoods whose boundary has no $C^1$-smooth subsets, due to the existence of wedge singularities densely on the boundary. This result is sharp in the sense that there can generally only exist countably many such singularities on an $\eps$-neighbourhood boundary~\cite[Lemma 5.2]{lamb2025boundariesvarepsilonneighbourhoodsplanarsets}. We have also shown that it is possible to extend the previous example to obtain an $\eps$-neighbourhood boundary on which wedge singularities lie densely and which also contains a subset of positive Hausdorff dimension on which curvature is not defined in the classical sense. This clarifies the existing result that curvature is defined almost everywhere on the boundary with respect to the one-dimensional Hausdorff measure~\cite[Theorem 7]{lamb2025boundariesvarepsilonneighbourhoodsplanarsets}. %The constructions presented here rely on the specific way of defining the functions involved.
An interesting direction for further study is to investigate whether such pathological geometries can naturally occur for example as boundaries of attractors in random dynamical systems with bounded noise. These attractors are by definition geometrically $\eps$-neighbourhoods, where $\eps$ has an interpretation as the maximum amplitude of additive noise in the system. Singularities on attractors are known to encode information about the dynamics in such systems.

\section*{Acknowledgments}
The authors gratefully  acknowledge support from EU Marie Sklodowska-Curie ITN Critical Transitions in Complex Systems (H2020-MSCA-ITN-2014 643073 CRITICS), which funded the PhD of KT. Furthermore, JSWL and MR were supported by the EPSRC (grants EP/W009455/1 and  EP/Y020669/1), and JSWL acknowledges support from the EPSRC Centre for Doctoral Training in Mathematics of Random Systems: Analysis, Modelling and Simulation (EP/S023925/1) and JST Moonshot R \& D Grant Number JPMJMS2021. KT gratefully acknowledges support from the European Research Council Advanced Grant (ERC AdG, ILLUSIVE: Foundations of Perception Engineering, 101020977).

\bibliographystyle{amsplain}

\bibliography{Set_Valued_Dynamics_Theory_1}{}

@BOOK{Ambrosio_et_al_Functions_of_Bounded_Variation,
title = {{F}unctions of {B}ounded {V}ariation and {F}ree {D}iscontinuity {P}roblems},
author = {{L.} Ambrosio and {N.} Fusco and {D.} Pallara},
year = {2000},
publisher={Oxford University Press},
}

@BOOK{Colonius_Kliemann_Dynamics_of_Control,
author = {{F.} {C}olonius and {W.} {K}liemann},
publisher={{B}irkh\"{a}user},
year = {2000},
title = {{T}he {D}ynamics of {C}ontrol},
address = {Boston}
}

@ARTICLE{DovMarRyaVuo2006,
title = {{T}he {C}antor function},
author = {{O.} {D}ovgoshey and {O.} {M}artio and {V.} {R}yazanov and {M.} {V}uorinen},
year = {2006},
journal={Expo. Math.},
volume={24},
pages={1-–37}
}

@ARTICLE{Federer_Curvature_measures,
title = {{C}urvature measures},
author = {{H.} Federer},
year = {1959},
journal={Trans. Amer. Math. Soc.},
volume={93},
pages={418--491}
}

@ARTICLE{Fu_Tubular_neighborhoods,
title = {{T}ubular neighborhoods in {E}uclidian spaces},
author = {{J.H.G.} Fu},
year = {1985},
journal={Duke Mathematical Journal},
volume={52},
number={4},
pages={1025--1046}
}

@MISC{lamb2025boundariesvarepsilonneighbourhoodsplanarsets,
      title={On Boundaries of $\varepsilon$-neighbourhoods of Planar Sets: Singularities, Global Structure, and Curvature}, 
      author={{J.}{S.}{W.} {L}amb and {M.} {R}asmussen and {K.}{G.} {T}imperi},
      year={2025},
      eprint={2012.13515},
      archivePrefix={arXiv},
      primaryClass={math.MG},
	  note = {Available online at: \url{https://arxiv.org/abs/2012.13515}},
}

@ARTICLE{Przeworski_An_Upper_Bound,
title = {{A}n upper bound on density for packings of collars about hyperplanes in $\mathbb{H}^n$},
author = {{A.} {P}rzeworski},
year = {2013},
journal={Geom. Dedicata},
volume={163},
pages={193--213}
}

@ARTICLE{Rataj_Winter_On_Volume,
title = {{O}n Volume and Surface Area of Parallel Sets},
author = {{J.} Rataj and {S.} Winter},
year = {2010},
journal={Indiana University Mathematics Journal},
volume={59},
number={5},
pages={1661--1685}
}

@ARTICLE{Rataj_Zajicek_On_the_Structure_of,
title = {{O}n the structure of sets with positive reach},
author = {{J.} Rataj and {L.} {Z}aj\'{i}\v{c}ek},
year = {2017},
journal={Math. Nachr. },
volume={290},
number={11--12},
pages={1806–1829}
}

@ARTICLE{Rataj_Zajicek_Smallness,
title = {{S}mallness of the critical values of distance functions in two-dimensional Euclidean and Riemannian Spaces},
author = {{J.} Rataj and {L.} {Z}aj\'{i}\v{c}ek},
year = {2020},
journal={Mathematika},
volume={66},
pages={297--324}
}

@ARTICLE{Stacho_On_the_volume,
title = {{O}n the volume function of parallel sets},
author = {{L.L.} Stach\'{o}},
year = {1976},
journal={Acta Sci. Math.},
volume={38},
pages={365--374}
}

@ARTICLE{TopoBif_of_MinInvSets,
title = {{T}opological Bifurcations of Minimal Invariant Sets for Set-valued Dynamical Systems},
author = {{J.S.W.} Lamb and {M.} Rasmussen and {C.} Rodrigues},
year = {2015},
journal={Proceedings of the American Mathematical Society},
volume={143},
number={9},
pages={3927-3937}
}

@BOOK{Rudin-principles,
  AUTHOR = {W. Rudin},
  TITLE = {{P}rinciples of {M}athematical {A}nalysis},
  EDITION = {3rd},
  PUBLISHER = {McGraw-Hill Book Co.},
  YEAR = {1976},
}

\end{document}